\newtheorem{theorem}{Theorem}
\newtheorem{lemma}[theorem]{Lemma}
\newtheorem{proposition}[theorem]{Proposition}
\begin{document}

\title{Spectrum of Cayley graphs on the symmetric group generated by transpositions}

\author{
   Roi Krakovski\thanks{Supported in part by postdoctoral support at the Simon Fraser University.}\\[1mm]
   {Department of Mathematics}\\{Simon Fraser University}\\{Burnaby, B.C. V5A 1S6}
\and
   Bojan Mohar\thanks{Supported in part by an NSERC Discovery Grant (Canada), by the Canada Research Chair program, and by the Research Grant P1--0297 of ARRS (Slovenia).}~~\thanks{On leave from: IMFM \& FMF, Department of Mathematics, University of Ljubljana, Ljubljana, Slovenia.}\\[1mm]
   {Department of Mathematics}\\{Simon Fraser University}\\{Burnaby, B.C. V5A 1S6}
   }


\maketitle

\begin{abstract}
For an integer $n\geq 2$, let $X_n$ be the Cayley graph on the symmetric group $S_n$ generated by the set of transpositions $\{(1~2),(1~3),\dots,(1~n)\}$. It is shown that the spectrum of $X_n$ contains all integers from $-(n-1)$ to $n-1$ (except 0 if $n=2$ or $n=3$).
\end{abstract}

\section{Introduction}
\label{intro}

For a graph $X$, the \emph{spectrum} of $X$ is the spectrum of its adjacency matrix.
For a group $G$ and a subset $T\subseteq G$, the \emph{Cayley graph} of $G$ generated by $T$, denoted by $X(G,T)$, is the graph with vertex set $G$ and an edge $(g,tg)$ for each $g\in G$ and $t\in T$. If $T$ is inverse-closed, then $X(G,T)$ is an undirected graph, which we will assume henceforth.

In this paper we are interested in the spectrum of the Cayley graph $X(S_n,T_n)$, where $n\geq2$ is an integer, $S_n$ is the symmetric group and $T_n:=\{(1~2),(1~3),\dots,(1~n)\} \subseteq S_n$. Friedman~\cite{Friedman00} proved that if $T_n' \subseteq S_n$ is any set of $n-1$ transpositions and $T_n' \neq T_n$ (up to conjugacy) then the spectrum of $X(S_n,T_n')$ is never integral. Abdollahi and Vatandoost~\cite{AV} conjectured that spectrum of  $X(S_n,T_n)$ is integral, and contains all integers in the range from $-(n-1)$ to $n-1$ (with the sole exception that when $n\leq3$, zero is not an eigenvalue of $X(S_n,T_n)$). Using a computer, the conjecture was verified for $n\leq 6$.
In this paper we prove the second part of the conjecture.

\begin{theorem}
\label{main}
Let $n \geq 2$ be an integer and let $T_n=\{(1~2),(1~3),\dots,(1~n)\} \subseteq S_n$. For each integer $1 \leq \ell \leq n-1$,  $\pm(n-\ell)$ are eigenvalues of $X(S_n,T_n)$ with multiplicity at least ${n-2 \choose \ell-1}$. If $n\ge4$, then $0$ is an eigenvalue of $X(S_n,T_n)$ with multiplicity at least ${n-1 \choose 2}$.
\end{theorem}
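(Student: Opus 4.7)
The plan is to reduce the problem to representation theory of $S_n$ and then apply a Jucys-Murphy-style computation on specific irreducibles. Since the adjacency operator $A = \sum_{i=2}^{n} L_{(1\,i)}$ of $X(S_n,T_n)$ (where $L_\sigma$ denotes left multiplication) commutes with every right multiplication, the decomposition $\mathbb{C}[S_n] \cong \bigoplus_{\lambda \vdash n} V_\lambda \otimes V_\lambda^*$ of the regular representation reduces the spectral problem to computing the eigenvalues of $M_\lambda := \sum_{i=2}^{n} \rho_\lambda((1\,i))$ acting on each irreducible $V_\lambda$. Each such eigenvalue will contribute to the spectrum of $X(S_n,T_n)$ with multiplicity $\dim V_\lambda$, and all eigenvalues will turn out to be integers.

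To diagonalize $M_\lambda$, let $N_\lambda$ be the image under $\rho_\lambda$ of the sum of all transpositions of $S_n$; as a central element, Schur's lemma forces it to act on $V_\lambda$ by the scalar $c_\lambda := \sum_{(i,j)\in\lambda}(j-i)$ (the content sum). Let $N'_\lambda$ be the image of the sum of all transpositions of $S_{n-1} := \mathrm{Stab}_{S_n}(1)$; this operator is $S_{n-1}$-equivariant, and by the branching rule $V_\lambda\downarrow_{S_{n-1}} \cong \bigoplus_{\mu \to \lambda} V_\mu$ together with Schur's lemma it acts on each branching component $V_\mu$ as the scalar $c_\mu$. Consequently $M_\lambda = N_\lambda - N'_\lambda$ acts on each $V_\mu \subset V_\lambda\downarrow_{S_{n-1}}$ as the integer $c_\lambda - c_\mu$, which is precisely the content of the unique box of $\lambda \setminus \mu$.

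For the eigenvalue $n-\ell$ with $1 \le \ell \le n-1$, I would take the hook $\lambda = (n-\ell+1, 1^{\ell-1})$: its removable box at position $(1, n-\ell+1)$ has content $n-\ell$, and removing it yields $\mu = (n-\ell, 1^{\ell-1})$, a hook in $S_{n-1}$ of dimension $\binom{n-2}{\ell-1}$. Hence $M_\lambda$ has eigenvalue $n-\ell$ on a subspace of dimension $\binom{n-2}{\ell-1}$ inside $V_\lambda$, giving at least this multiplicity for $n-\ell$ in the Cayley spectrum. The companion eigenvalue $-(n-\ell)$ then follows at once from bipartiteness: every element of $T_n$ is odd, so $X(S_n,T_n)$ is bipartite (by sign parity), and the sign twist $v \mapsto \mathrm{sgn}(g)\,v(g)$ conjugates $A$ to $-A$.

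For $n \ge 4$ and the eigenvalue $0$, I would apply the same mechanism to $\lambda = (n-2, 2)$, whose removable box $(2,2)$ has content $0$ and yields $\mu = (n-2,1)$ with $\dim V_\mu = n-2$. The contribution to the multiplicity of $0$ is therefore at least $(n-2)\dim V_{(n-2,2)}$, and a direct hook-length check gives $\dim V_{(n-2,2)} \ge (n-1)/2$ for all $n \ge 4$, which suffices to exceed $\binom{n-1}{2}$. The main obstacle I anticipate is rigorously establishing the content-action formula of the second paragraph: one must verify that the restriction of $N'_\lambda$ to each $V_\mu$-component is indeed central with respect to the $\rho_\mu$-action of $S_{n-1}$, and then identify its scalar with $c_\mu$ via the classical content identity — this is where the telescoping $c_\lambda - c_\mu = c_{\lambda/\mu}$ becomes the crucial integer structure that forces the spectrum to be integral.
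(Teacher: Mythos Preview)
Your argument is correct and in fact proves more than the theorem asks: since every eigenvalue of $A$ arises as the content of a removable box of some $\lambda\vdash n$, your method establishes the full integrality of the spectrum, not merely that the integers $\pm(n-\ell)$ occur. The paper takes an entirely different, combinatorial route. It realises $X(S_n,T_n)$ as the partial permutation graph $P(n-1,n)$, builds Schreier coset graphs $X[k]=X(S_n,S'_{n-k},T_n)$ on $k$-tuples, and then constructs explicit $\{-1,0,1\}$-valued eigenvectors for the eigenvalue $n-k-1$ on $X[k]$ (supported on tuples avoiding $1$, with the key property that they sum to zero over every maximal clique of an auxiliary graph isomorphic to $P(k,n-1)$); Lemma~\ref{eigenforeqi} then lifts these to $X(S_n,T_n)$. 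The eigenvalue $0$ is handled separately via a covering $X(S_n,T_n)\to K(2,n)$ and an explicit tensor-type construction of null vectors on $K(2,n)$. Your representation-theoretic approach is shorter, yields the sharper multiplicity $\binom{n-2}{\ell-1}\binom{n-1}{\ell-1}$ from the single hook $(n-\ell+1,1^{\ell-1})$ (you only claimed $\binom{n-2}{\ell-1}$, which undercounts by the factor $\dim V_\lambda$ you set up but did not invoke), and treats $0$ on the same footing as the other eigenvalues. The paper's approach, by contrast, avoids any appeal to character theory or the branching rule and produces concrete, sparse eigenvectors, at the cost of a longer combinatorial build-up and a separate ad hoc argument for $0$.
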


Note that $\pm(n-1)$ is a simple eigenvalue of $X(S_n,T_n)$ since the graph is $(n-1)$-regular, bipartite, and connected.

\section{Partial permutation graphs}
\label{sec:PartialPermutationGraphs}

In this section we introduce a family of graphs, called \emph{partial permutation graphs}.
Let $d$ and $n$ be positive integers such that $1 \leq d \leq n$. Let $S_{d,n}$ be the set of all $d$-tuples with entries from the set $[n]=\{1,\dots,n\}$ having no repeated entries, that is,
$$
   S_{d,n} = \{(a_1,\dots,a_d)\mid a_1,\dots,a_d \in [n]~\textrm{ and~} a_i\neq a_j \textrm{~for~} 1\leq i<j \leq d\}.
$$
The \emph{partial permutation graph} $P(d,n)$ is defined as follows. Its vertex set is $S_{d,n}$, and two $d$-tuples are adjacent if and only if they differ in exactly one coordinate. The following lemma, whose proof is straightforward, lists some basic properties of $P(d,n)$.

\begin{lemma}
\label{basic-prop}
Let $d$ and $n$ be positive integers such that $1 \leq d \leq n$. Then $P(d,n)$ satisfies the following properties:
\begin{enumerate}
	\item[\rm(i)] $|V(P(d,n))|=\frac{n!}{(n-d)!}$.
	\item[\rm(ii)] The graph $P(d,n)$ is $d(n-d)$-regular.
	\item[\rm(iii)] The cardinality of every maximal clique in $P(d,n)$ is $n-d+1$.
\end{enumerate}
\end{lemma}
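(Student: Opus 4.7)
The three parts require only direct counting and a short structural observation; no heavy machinery is needed. I would handle them in the order stated.

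For (i), I would simply note that a vertex of $P(d,n)$ is an ordered selection of $d$ distinct elements from $[n]$: there are $n$ choices for the first coordinate, $n-1$ for the second, \dots, and $n-d+1$ for the last, giving $n(n-1)\cdots(n-d+1) = n!/(n-d)!$.

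For (ii), I would fix a vertex $u=(a_1,\dots,a_d)$ and enumerate its neighbors. A neighbor of $u$ is obtained by choosing a coordinate $i\in[d]$ to alter (there are $d$ choices) and replacing $a_i$ by an element of $[n]\setminus\{a_1,\dots,a_d\}$ (there are $n-d$ choices, since the result must still have distinct entries and must differ from $u$ at position $i$). These choices yield distinct neighbors, so $\deg(u)=d(n-d)$.

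Part (iii) is the only one with real content. The plan is to show that in any clique $C$, all vertices agree on the same $d-1$ coordinates. Fix two vertices $u,v\in C$ and let $i$ be the unique coordinate where they differ. Take any third vertex $w\in C$; I would argue $w$ must also agree with $u$ and $v$ on every coordinate except $i$. Indeed, $w$ differs from $u$ in exactly one position $j$, and if $j\neq i$, then $w_i=u_i\neq v_i$ while $w_j\neq u_j=v_j$, so $w$ and $v$ would differ in both coordinates $i$ and $j$, contradicting adjacency in $P(d,n)$. Hence every vertex of $C$ agrees with $u$ outside coordinate $i$, which means the entries of $C$ at coordinate $i$ form a set of distinct values from $[n]\setminus\{a_j:j\neq i\}$, a set of size $n-d+1$. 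Thus $|C|\le n-d+1$, and conversely the entire set of $d$-tuples obtained from $u$ by letting coordinate $i$ range over the $n-d+1$ available values is a clique of size $n-d+1$ containing $u$, so every clique extends to a clique of this size. Therefore every maximal clique has exactly $n-d+1$ vertices.

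The only step requiring more than a line of thought is the structural argument in (iii); the counting in (i) and (ii) is immediate. No obstacle is expected, and the lemma is properly described as having a ``straightforward'' proof.
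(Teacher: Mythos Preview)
Your proposal is correct; the paper omits the proof entirely, calling it ``straightforward,'' and your argument is exactly the kind of direct verification that label suggests. The only place requiring any care is (iii), and your reduction---showing that three pairwise adjacent tuples must all vary in the same coordinate---is clean and complete, including the implicit edge case $d=n$ where the graph is edgeless and each vertex is itself a maximal clique of size $1=n-d+1$.
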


Next we show that $X(S_n,T_n)$ is a partial permutation graph.

\begin{lemma}
\label{bipartite}
Let $n\geq2$ be an integer. Then $P(n-1,n)$ is isomorphic to $X(S_n,T_n)$. In particular, $P(n-1,n)$ is bipartite.
\end{lemma}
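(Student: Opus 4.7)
The plan is to exhibit an explicit bijection $\phi : S_n \to S_{n-1,n}$ and verify that it carries edges to edges in both directions. For $\sigma \in S_n$, set
\[
   \phi(\sigma) = \bigl(\sigma^{-1}(2),\sigma^{-1}(3),\ldots,\sigma^{-1}(n)\bigr).
\]
This is a tuple of $n-1$ distinct elements of $[n]$, so $\phi(\sigma) \in S_{n-1,n}$. It is a bijection because from $\phi(\sigma)$ one reads off $\sigma^{-1}(i)$ for $i \in \{2,\ldots,n\}$, and $\sigma^{-1}(1)$ is forced as the missing entry; inverting recovers $\sigma$. Since both $|S_n|$ and $|S_{n-1,n}|$ equal $n!$ by Lemma~\ref{basic-prop}(i), this suffices.

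Next I would check that $\phi$ preserves adjacency. Fix $\sigma \in S_n$ and $k \in \{2,\ldots,n\}$, and let $\tau = (1\;k)\sigma$. Then $\tau^{-1} = \sigma^{-1}(1\;k)$, so $\tau^{-1}(i) = \sigma^{-1}(i)$ for $i \notin \{1,k\}$, while $\tau^{-1}(1) = \sigma^{-1}(k)$ and $\tau^{-1}(k) = \sigma^{-1}(1)$. Reading off $\phi(\tau)$, the tuples $\phi(\sigma)$ and $\phi(\tau)$ agree in every coordinate indexed by $i \in \{2,\ldots,n\} \setminus \{k\}$ and disagree in the coordinate indexed by $k$ (the two entries there are distinct since $\sigma^{-1}$ is injective and $1 \neq k$). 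Hence $\phi(\sigma)$ and $\phi(\tau)$ are adjacent in $P(n-1,n)$.

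For the converse, suppose $\phi(\sigma)$ and $\phi(\tau)$ differ in a unique coordinate, indexed by some $k \in \{2,\ldots,n\}$. Then $\tau^{-1}$ and $\sigma^{-1}$ coincide on $[n] \setminus \{1,k\}$; being bijections of $[n]$, they must permute the pair $\{\sigma^{-1}(1), \sigma^{-1}(k)\}$ on $\{1,k\}$. The disagreement at $k$ forces $\tau^{-1}(k) = \sigma^{-1}(1)$ and $\tau^{-1}(1) = \sigma^{-1}(k)$, i.e.\ $\tau^{-1} = \sigma^{-1}(1\;k)$, so $\tau = (1\;k)\sigma$ and $(\sigma,\tau)$ is an edge of $X(S_n,T_n)$. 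Hence $\phi$ is a graph isomorphism.

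For bipartiteness, the cleanest argument is independent of $\phi$: every generator in $T_n$ is an odd permutation, so the sign homomorphism $\mathrm{sgn}\colon S_n \to \{\pm 1\}$ has the property that $\mathrm{sgn}((1\;k)\sigma) = -\mathrm{sgn}(\sigma)$, and its fibers give a proper $2$-coloring of $X(S_n,T_n)$. There is no real obstacle here; the only point requiring care is the index bookkeeping in the bijection, which is why I chose $\sigma \mapsto (\sigma^{-1}(2),\ldots,\sigma^{-1}(n))$ rather than the more tempting $\sigma \mapsto (\sigma(1),\ldots,\sigma(n-1))$, under which left multiplication by $(1\;k)$ would alter two coordinates rather than one.
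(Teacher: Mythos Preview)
Your proof is correct and follows essentially the same route as the paper: both exhibit an explicit bijection between $S_n$ and $S_{n-1,n}$ via $\sigma \leftrightarrow (\sigma^{-1}(2),\ldots,\sigma^{-1}(n))$ (your map is literally the inverse of the paper's), and the edge-preservation check is the same computation. Your explicit bipartiteness argument via the sign homomorphism is exactly what the paper leaves implicit in the phrase ``in particular.''
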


\begin{proof}
Set $P:=P(n-1,n)$ and $X:=X(S_n,T_n)$. For an $n$-tuple $(c_1,\dots,c_n)$ of pairwise distinct elements of $[n]$, let $\pi_{(c_1,\dots,c_n)} \in S_n$ be the permutation defined by $\pi_{(c_1,\dots,c_n)}(c_j)=j$ ($j=1,\dots,n$).

Let $\phi:V(P) \rightarrow V(X)$ be the mapping defined by $\phi((a_1,\dots,a_{n-1}))=\pi_{(a_0,a_1,\dots,a_{n-1})}$, where $\{a_0\} = [n] \setminus \{ a_1,\dots,a_{n-1} \}$. We claim that $\phi$ is a graph isomorphism. Clearly, $\phi$ is bijective. It remains to show that $p_1p_2 \in E(P)$ if and only if $\phi(p_1)\phi(p_2) \in E(X)$.

Suppose $p_1p_2 \in E(P)$. Let $p_1=(a_1,\dots,a_{n-1})$ and $p_2=(b_1,\dots,b_{n-1})$. Let $1 \leq \ell \leq n-1$ so that $p_1$ and $p_2$ differ  in the $\ell$th  coordinate (since $p_1p_2 \in E(P)$, $\ell$ exists and it is unique). By definition of $\phi$, we have $\phi(p_1)=(b_{\ell},a_1,\dots, a_{n-1})$ and $\phi(p_2)=(a_{\ell},a_1,\dots,a_{\ell-1},b_{\ell},a_{\ell+1},\dots, a_{n-1})$. Then $\phi(p_1)=\sigma \cdot \pi_{(a_{\ell},a_1,\dots,a_{\ell-1},b_{\ell},a_{\ell+1},\dots, a_{n-1})} = \sigma\cdot\phi(p_2)$, where $\sigma \in S_n$ is the transposition $(1~\ell+1)$, hence $\phi(p_1)\phi(p_2) \in E(X)$.

For the converse, observe that if $p_1p_2 \notin E(P)$, then $\phi(p_1)$ and $\phi(p_2)$ differ in at least two coordinates that are different from the first coordinate. Hence, $\phi(p_1)\neq \sigma \cdot \phi(p_2)$ for any transpositions $\sigma$ of the form $(1~i)$ ($1 < i \leq n$).
\end{proof}

For integers $1\leq d<n$, let $I(n,d)$ be the set of all subsets of $[n]$ of cardinality $d+1$. For $I \in I(n,d)$, let $A_I$  be the set of all $d$-tuples (with no repetitions) with entries from the set $I$. Note that $|I(n,d)|={n \choose d+1}$ and $|A_I|= (d+1)!$.  Let $\mathcal{A}(n,d):=\{A_I\mid I \in I(n,d)\}$.

For $\mathcal{A}\subseteq \mathcal{A}(n,d)$, we say that  a $d$-tuple $t$ is \emph{unique} with respect to $\mathcal{A}$, if there exists $A \in \mathcal{A}$, such that $t \in A$ but $t \notin A'$, for every $A' \in \mathcal{A} \setminus A$. We say that $\mathcal{A}$ is \emph{independent}, if every subset of $\mathcal{A}$ has (at least one) unique $d$-tuple.  With this notation we have the following.

\begin{lemma}
\label{IS}
There exists an independent set $\mathcal{A} \subseteq \mathcal{A}(n,d)$ of cardinality $n-1 \choose d$.
\end{lemma}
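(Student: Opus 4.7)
The plan is to exhibit $\mathcal{A}$ explicitly and then verify independence by producing, for each member $A_I$, an explicit \emph{witness} $d$-tuple that lies in $A_I$ and in no other member of $\mathcal{A}$.

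For the construction, I would single out a distinguished element of $[n]$ --- say $1$ --- and take
$$
   \mathcal{A} \;:=\; \{\,A_I \mid I \in I(n,d),\ 1 \in I\,\}.
$$
The $(d{+}1)$-subsets of $[n]$ containing $1$ correspond bijectively to the $d$-subsets of $[n]\setminus\{1\}$, so $|\mathcal{A}| = \binom{n-1}{d}$ as required.

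For the witness step, to each $I$ with $1\in I$ I would associate any ordering $t$ of the $d$ elements of $I\setminus\{1\}$, viewed as a $d$-tuple in $A_I$ whose entries all avoid $1$. To check $t \notin A_{I'}$ for any other $A_{I'}\in\mathcal{A}$, observe that $t\in A_{I'}$ would mean the $d$-set $I\setminus\{1\}$ is contained in $I' = \{1\}\cup(I'\setminus\{1\})$; since $1\notin I\setminus\{1\}$, this forces $I\setminus\{1\}\subseteq I'\setminus\{1\}$, and both sides having size $d$ forces $I=I'$. Consequently, for any nonempty subfamily $\mathcal{B}\subseteq\mathcal{A}$ and any $A_I\in\mathcal{B}$, the tuple $t$ is unique with respect to $\mathcal{B}$, so $\mathcal{A}$ is independent.

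I do not anticipate any real obstacle here: the construction is explicit, and the uniqueness check is a one-line set-theoretic argument exploiting that a $d$-tuple avoiding the distinguished element $1$ determines the $d$-set $I\setminus\{1\}$, which in turn determines $I$ once $1$ is required to lie inside. Since the lemma only asserts existence at this cardinality, there is no need to worry about optimality of the choice.
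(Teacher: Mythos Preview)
Your proof is correct and considerably more direct than the paper's. The paper proceeds by induction on $n+d$: it treats the base cases $d=1$ and $d=n-1$ separately, and for the inductive step it splits $\mathcal{A}(n,d)$ into the part indexed by $(d{+}1)$-sets avoiding $n$ and the part indexed by $(d{+}1)$-sets containing $n$, pulls independent subfamilies of sizes $\binom{n-2}{d}$ and $\binom{n-2}{d-1}$ from each via the induction hypothesis, and then argues that their union is still independent. You instead exhibit the family $\{A_I : 1\in I\}$ in one stroke and give each $A_I$ a private witness tuple (any ordering of $I\setminus\{1\}$) that belongs to no other member of the family; this is stronger than the paper's ``every subfamily has some unique tuple'' requirement and makes the verification a single line. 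Amusingly, if one unwinds the paper's induction (using the same distinguished element at every stage), it produces exactly your family, but the paper does not observe this. Your argument is shorter and more transparent; the paper's approach buys nothing extra here beyond illustrating the Pascal-type decomposition $\binom{n-1}{d}=\binom{n-2}{d}+\binom{n-2}{d-1}$.
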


\begin{proof}
We proceed by induction on $n+d$. If $d=1$, let $\mathcal{A}=\{A_{\{1,2\}},A_{\{1,3\}},\dots,
\allowbreak A_{\{1,n\}}\}$. Note that $\{1,i\}\in I(n,1)$ and $A_{\{1,i\}}=\{(1),(i)\}$, for $i=2,\dots,n$. Clearly, $\mathcal{A}$ is an independent set of cardinality $n-1$ and the claim follows. If $d=n-1$, then $|I(n,n-1)|=1$, and the claim follows with $\mathcal{A}=\{A_{\{1,\dots,n\}}\}$.

We may assume that $d \geq 2$ and $n\geq d+2$. Let $\mathcal{A}^*(n,d)= \mathcal{A}(n,d) \setminus \mathcal{A}(n-1,d)$.
By the induction hypothesis, there exists an independent set $\mathcal{A}_1 \subseteq \mathcal{A}(n-1,d)$ of cardinality $n-2 \choose d$. Next we claim that

\begin{itemize}
	\item [(1)] there exists an independent set $\mathcal{A}_2 \subseteq \mathcal{A}^*(n,d)$ of cardinality $n-2 \choose d-1$, such that  for every subset $\mathcal{A}_2' \subseteq \mathcal{A}_2$ there is a unique $d$-tuple with respect to $\mathcal{A}_2'$, such that one of its entries contains the value $n$.
\end{itemize}
To prove (1), let  $\phi: \mathcal{A}^*(n,d) \rightarrow  \mathcal{A}(n-1,d-1)$ be a mapping defined by $$\phi(A_I)=A_{I \setminus \{n\}}$$ where $ I \in I(n,d)$ and $n \in I$.

Clearly, $\phi$ is bijective.  By the induction hypothesis, there is an independent set $\mathcal{A}' \subseteq \mathcal{A}(n-1,d-1)$ of cardinality $n-2 \choose d-1$. It is easily seen that $\mathcal{A}_2:=\phi^{-1}(\mathcal{A'})$ is an independent set in $\mathcal{A}^*(n,d)$ with desired property. This proves (1).

\smallskip

By definition, $\mathcal{A}_1$ and $\mathcal{A}_2$ are disjoint, hence $|\mathcal{A}|=|\mathcal{A}_1|+|\mathcal{A}_2|={n-2 \choose d} + {n-2 \choose d-1}= {n-1 \choose d}$.
To conclude the proof it remains to show that $\mathcal{A}=\mathcal{A}_1 \cup \mathcal{A}_2 \subseteq \mathcal{A}(n,d)$ is an independent set.

Let $\mathcal{A}' \subseteq \mathcal{A}$. We will show that $\mathcal{A}'$ has a unique $d$-tuple. This is trivially true if $\mathcal{A}' \subseteq \mathcal{A}_1$. Hence, we may assume that $ \mathcal{A}_2 \cap \mathcal{A'} \neq \emptyset$. Let $t$ be the unique $d$-tuple with respect to $\mathcal{A}_2 \cap \mathcal{A'} \subseteq \mathcal{A}_2$ as exists by (1). Then, one of the entries of $t$ contains the value $n$. The uniqueness of $t$  with respect to $\mathcal{A}'$ follows since $t$ is unique with respect to $\mathcal{A}_2$ and for each $A \in \mathcal{A}_1$, no $d$-tuple of $A$ has an entry with value $n$. This concludes the proof.
\end{proof}

\begin{lemma}
\label{function}
Let\/ $1\leq d < n $ be  integers and let $X=P(d,n)$. Then there exists a family  $\mathcal{F}_X$ of functions, each with domain $V(X)$ and range $\{-1,0,1\}$, such that the following holds:
\begin{enumerate}
	\item[\rm(i)] $\{-1,1\} \subseteq Im(\phi)$, for every $\phi \in \mathcal{F}_X$.
	\item[\rm(ii)] $\sum_{v \in V(K)} \phi(v) =0$, for every $\phi \in \mathcal{F}_X$ and every maximal clique $K$.
	\item[\rm(iii)] If we view each $\phi$ as a vector in $\mathbb{R}^{V(X)}$, then $\mathcal{F}_X$ contains a linearly independent set of cardinality $n-1 \choose d$.
\end{enumerate}
\end{lemma}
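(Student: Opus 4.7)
My plan is to produce $\mathcal{F}_X$ by associating, to every set $A_I$ in the independent family $\mathcal{A}\subseteq\mathcal{A}(n,d)$ from Lemma~\ref{IS}, a signed indicator function $\phi_{A_I}\colon V(X)\to\{-1,0,1\}$ supported on $A_I$. The natural candidate uses the sign of a permutation: fix the natural order on $[n]$, and for each $t=(t_1,\dots,t_d)\in A_I$ let $m(t)$ be the unique element of $I\setminus\{t_1,\dots,t_d\}$; define
$$
  \phi_{A_I}(t) \;=\; \operatorname{sgn}\bigl(m(t),t_1,\dots,t_d\bigr),
$$
regarded as a permutation of the ordered set $I$, and set $\phi_{A_I}(t)=0$ for $t\notin A_I$. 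I would then take $\mathcal{F}_X=\{\phi_A \mid A\in\mathcal{A}\}$, which automatically has cardinality $\binom{n-1}{d}$.

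Property~(i) is immediate: for any $t\in A_I$, swapping $t_j$ with $m(t)$ in coordinate $j$ produces another element $t'\in A_I$, and the sequences $(m(t),t_1,\dots,t_d)$ and $(m(t'),t'_1,\dots,t'_d)$ differ by a single transposition of $I$, so $\phi_{A_I}$ takes both signs.

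For property~(ii) I would use the description of maximal cliques in $P(d,n)$: each such clique $K$ consists of all $d$-tuples obtained by fixing a coordinate $j\in[d]$, fixing pairwise distinct values $a_i\in[n]$ for $i\neq j$, and letting the $j$th coordinate range over the $n-d+1$ values in $[n]\setminus\{a_i:i\neq j\}$. If some $a_i\notin I$, then $K\cap A_I=\emptyset$ and the clique sum vanishes trivially. Otherwise the fixed entries occupy $d-1$ of the $d+1$ elements of $I$, so exactly two vertices $t_u,t_v\in K$ lie in $A_I$, with missing elements $v$ and $u$ respectively. The key calculation is that $(m(t_u),t_u)=(v,\dots,u,\dots)$ and $(m(t_v),t_v)=(u,\dots,v,\dots)$ differ by a single transposition of $I$, hence $\phi_{A_I}(t_u)+\phi_{A_I}(t_v)=0$ and the total clique sum is again zero. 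This sign bookkeeping is the main technical step, and the main place where I expect to be careful.

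Finally, for property~(iii), suppose $\sum_{A\in\mathcal{A}'}c_A\phi_A=0$ where $\mathcal{A}'\subseteq\mathcal{A}$ is the set of indices with $c_A\neq 0$. By independence of $\mathcal{A}$ (Lemma~\ref{IS}) there is a $d$-tuple $t$ that belongs to exactly one member $A\in\mathcal{A}'$. Evaluating the linear combination at $t$ yields $c_A\phi_A(t)=c_A\cdot(\pm 1)\neq 0$, a contradiction; hence $\mathcal{A}'=\emptyset$ and the family $\mathcal{F}_X$ is linearly independent of size $\binom{n-1}{d}$.
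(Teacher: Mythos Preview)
Your proof is correct and follows essentially the same approach as the paper. The paper defines $\phi_I$ via the bipartition of $X[A_I]\cong P(d,d+1)$ (which is bipartite by Lemma~\ref{bipartite}), whereas you make this bipartition explicit as the sign of the permutation $(m(t),t_1,\dots,t_d)$; these are the same function up to a global sign on each $A_I$. Your verification of~(ii) by direct case analysis on whether the fixed entries of a maximal clique all lie in $I$, and your linear-independence argument using the unique $d$-tuple for an arbitrary nonempty subset $\mathcal{A}'\subseteq\mathcal{A}$, are slightly more explicit than the paper's versions but amount to the same computation. The only cosmetic difference is that the paper takes $\mathcal{F}_X=\{\phi_I: I\in I(n,d)\}$ (all $\binom{n}{d+1}$ functions) and then extracts the independent subfamily for~(iii), while you take $\mathcal{F}_X$ to be just the $\binom{n-1}{d}$ functions indexed by $\mathcal{A}$ from the start.
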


\begin{proof}
Consider $A_I \in \mathcal{A}(n,d)$, for some $I \in I(n,d)$. Since each element of $A_I$ is a $d$-tuple, we can view $A_I$ as a subset  of $V(X)$. Then, $X[A_I]\subseteq X$ (the subgraph of $X$ induced on the vertex set $A_I$) is isomorphic to $P(d,d+1)$. By Lemma \ref{bipartite}, $X[A_I]$ is bipartite. Let $A_I^1,A_I^2 \subseteq A_I$ ($A_I^1 \cup A_I^2=A_I$) be the corresponding bipartition. Let $\phi_{I}:V(X) \rightarrow \{-1,0,1\}$ be defined by
$$
\phi_{I}(v) = \left\{
              \begin{array}{rl}
              	-1, & v \in A_I^1,\\[1mm]
                1, & v \in A_I^2,\\[1mm]
                0, & v\in V(X)\setminus A_I.
              \end{array}
             \right.
$$

Let $\mathcal{F}_X:=\{\phi_{I}\mid I \in I(n,d)\}$. We will show that $\mathcal{F}_X$ satisfies (i)--(iii). Property (i) is satisfied trivially, since for every  $I \in I(n,d)$, $X[A_I]$ is bipartite with at least one edge.

For (ii) we argue as follows.
We may assume that $n\geq d+2$, for if $n=d+1$ then $X[A_I]$ is isomorphic to $X$ (since then $A_I=V(X)$) and $\phi_{I}$ satisfies (ii) as required.
Let $K \subseteq V(X)$ be a maximal clique in $X$. Since $n \geq d+2$, every edge of $X$ is contained in a clique of size at least 3, and so $|K| \geq 3$.  Since $X[A_I]$ is bipartite, $|K \cap A_I| \leq 2$. We claim that
\begin{enumerate}
	\item [(1)] $|K\cap A_I| \neq 1$.
\end{enumerate}
To prove (1), suppose to the contrary that $|K\cap A_I|=1$. Let $v \in K\cap A_I$. Let $a \in I$ be such that $a$ does not appear in the $d$-tuple $v$ ($a$ exists since $|I|=d+1$). Since $K$ is a clique, the $d$-tuples in $K$ agree on exactly $d-1$ coordinates and pairwise differ in exactly one coordinate, say the $j$th coordinate ($1 \leq j \leq d$). Now consider the $d$-tuple $y$, obtained from $v$ by changing its $j$th entry to $a$. Clearly $y$ has no repetitions, $y\neq v$, and since $K$ is maximal, $y \in V(K)$. But by the definition of $A_I$, $y \in A_I$. Hence, $|K\cap A_I| \geq 2$; a contradiction. This proves (1).

By (1) and since $|K \cap A_I| \leq 2$, we have either $|K \cap A_I| =0$ or $|K \cap A_I| = 2$. In both cases $\phi_{I}$ satisfies the equation in (ii), thus (ii) holds.

Finally, part (iii) is a consequence of Lemma~\ref{IS}. First, we take a subset $\mathcal{A} \subseteq \mathcal{A}(n,d)$ of cardinality $n-1 \choose d$ which is independent in the sense of Lemma~\ref{IS}.
Next, we consider the functions in $\{\phi_{I}~|~A_I \in \mathcal{A} \}$.  The existence of unique $d$-tuples shows that for each $\phi_I$, there is a $d$-tuple $a$ such that $\phi_I(a)\ne0$, but $\phi_J(a)=0$ for every $A_J \in {\mathcal A} \setminus \{ A_I \}$.
Thus we have a linearly independent set in ${\mathcal F}_X$ of cardinality $ n-1 \choose d$.
\end{proof}

\section{Proof of the main result}
\label{sec:ProofOfMainResult}

Let $G$ be a group, $H \leq G$ a subgroup of $G$ and $T \subseteq G$. The \emph{Schreier coset graph} on $G / H$ generated by $T$ is the graph $X=X(G,H,T)$ with $V(X)=G/H=\{gH:~g\in G\}$, the set of left cosets of $H$, and there is an edge $(gH,tgH)$ for each coset $gH$ and each $t \in T$. If $T$ is inverse-closed, then $X$ is an undirected multigraph (possibly with loops). Note that if $1_G$ is the identity element of $G$, then $X(G,\{1_G\},T)=X(G,T)$ is the Cayley graph on $G$ generated by $T$. The following lemma is well-known, see, e.g.~\cite{Friedman00}.

\begin{lemma}
\label{eigenforeqi}
Let $G$ be a group, $T$ an inverse-closed subset of $G$, and $H \leq K \leq G$. If $\lambda \in \mathbb{R}$ is an eigenvalue of $X(G,K,T)$ of multiplicity $p$, then $\lambda$ is an eigenvalue of $X(G,H,T)$ of multiplicity at least $p$.
\end{lemma}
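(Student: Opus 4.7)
The plan is to prove the lemma by constructing an injective linear map from eigenvectors of $X(G,K,T)$ to eigenvectors of $X(G,H,T)$ with the same eigenvalue. Since $H \leq K$, every left coset $gH$ is contained in a unique left coset $gK$, so the map $\pi : G/H \to G/K$ defined by $\pi(gH) = gK$ is well-defined and surjective. I would then define the pullback $L : \mathbb{R}^{G/K} \to \mathbb{R}^{G/H}$ by $L(f) = f \circ \pi$, that is, $L(f)(gH) = f(gK)$. This map is linear, and it is injective because $\pi$ is surjective.

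The key step is the intertwining relation $A_H \circ L = L \circ A_K$, where $A_H$ and $A_K$ denote the adjacency operators of $X(G,H,T)$ and $X(G,K,T)$ respectively. In the Schreier multigraph $X(G,M,T)$ for any subgroup $M \leq G$, each coset $gM$ has one outgoing edge per element $t \in T$, so the adjacency operator acts by $(A_M f)(gM) = \sum_{t \in T} f(tgM)$. Therefore, for any $f \in \mathbb{R}^{G/K}$ and any $gH \in G/H$,
$$ (A_H L(f))(gH) = \sum_{t \in T} L(f)(tgH) = \sum_{t \in T} f(tgK) = (A_K f)(gK) = L(A_K f)(gH). $$
The middle equality uses $\pi(tgH) = tgK$, which is immediate from $H \leq K$. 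This proves the intertwining.

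With the intertwining established, the conclusion follows quickly: if $f_1, \dots, f_p$ span the $\lambda$-eigenspace of $A_K$, then $L(f_1), \dots, L(f_p)$ are $\lambda$-eigenvectors of $A_H$, and they are linearly independent by injectivity of $L$. Hence $\lambda$ is an eigenvalue of $A_H$ with multiplicity at least $p$. I do not anticipate a genuine obstacle; the only point requiring care is the (standard) fact that the adjacency operator of a Schreier coset multigraph is given by the sum-over-generators formula above, which correctly accounts for the multi-edges that arise when $tgM = t'gM$ for distinct $t,t' \in T$ or loops when $tgM = gM$.
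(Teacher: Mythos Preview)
Your argument is correct: the pullback along the natural surjection $G/H\to G/K$ intertwines the two adjacency operators (with the multigraph convention the paper uses), and injectivity of the pullback preserves linear independence of eigenvectors. The paper itself does not prove this lemma but simply cites it as well-known (referring to Friedman~\cite{Friedman00}), so there is no alternative argument to compare against; your proof is exactly the standard one.
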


The following lemma is straightforward.

\begin{lemma}
\label{tran}
For $1\leq k \leq n-1$, let $S_{n-k}'$ be the subgroup of $S_n$ containing all permutations fixing the elements $n-k+1,\dots,n$.
For each $k$-tuple $(i_1,\dots,i_k) \in [n]^k$, where $i_1,\dots,i_k$ are pairwise distinct, let $\sigma_{(i_1,\dots,i_k)} \in S_n$ be a permutation so that $\sigma_{(i_1,\dots,i_k)}(n-j+1)=i_j$, for $j=1,\dots,k$.
Let $Z:=\{\sigma_{(i_1,\dots,i_k)}: i_1,\dots ,i_k ~\textrm{~are pairwise distinct} \}$. Then, $S_n$ is a disjoint union of all left cosets $zS_{n-k}'$, $z \in Z$.
\end{lemma}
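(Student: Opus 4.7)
The plan is a direct coset-decomposition argument, using the standard identification of left cosets $\pi S_{n-k}'$ with the ordered image of the set $\{n-k+1,\dots,n\}$ under $\pi$. The key observation is that $\pi S_{n-k}' = \pi' S_{n-k}'$ if and only if $\pi^{-1}\pi' \in S_{n-k}'$, which happens precisely when $\pi^{-1}\pi'$ fixes every element of $\{n-k+1,\dots,n\}$, i.e., when $\pi(n-j+1)=\pi'(n-j+1)$ for all $j=1,\dots,k$.

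Using this characterization, I would first show that distinct elements of $Z$ yield distinct cosets: two representatives $\sigma_{(i_1,\dots,i_k)}$ and $\sigma_{(j_1,\dots,j_k)}$ in $Z$ determine the same coset iff $i_j = \sigma_{(i_1,\dots,i_k)}(n-j+1) = \sigma_{(j_1,\dots,j_k)}(n-j+1) = j_j$ for each $j$, i.e., iff the two $k$-tuples coincide. Hence the cosets $\{zS_{n-k}' : z \in Z\}$ are pairwise disjoint.

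For exhaustiveness, given any $\pi \in S_n$, I would set $i_j := \pi(n-j+1)$ for $j=1,\dots,k$. Since $\pi$ is a bijection, the $i_j$ are pairwise distinct, so $(i_1,\dots,i_k)$ indexes a representative $\sigma := \sigma_{(i_1,\dots,i_k)} \in Z$. By construction $\sigma$ agrees with $\pi$ on $\{n-k+1,\dots,n\}$, so $\sigma^{-1}\pi \in S_{n-k}'$ and therefore $\pi \in \sigma S_{n-k}'$. As a sanity check (and an alternative to exhaustiveness), one observes $|Z| = n!/(n-k)! = [S_n : S_{n-k}']$, so the pairwise distinct cosets already saturate $S_n$ by counting.

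There is no real obstacle: the lemma is essentially the statement that any family of permutations realizing every possible ordered image of the last $k$ points forms a transversal for the pointwise stabilizer $S_{n-k}'$, and both halves of the argument reduce to reading off the values of a permutation on $\{n-k+1,\dots,n\}$.
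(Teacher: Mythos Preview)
Your argument is correct and is precisely the straightforward coset-decomposition the paper has in mind; the paper in fact states the lemma without proof, calling it ``straightforward,'' and your characterization of left $S_{n-k}'$-cosets by the ordered image of $\{n-k+1,\dots,n\}$ is the natural way to see it.
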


Let $X[k]$ be the Schreier coset graph $X(S_n,S_{n-k}',T)$, where $1 \leq k \leq n-1$ and $T \subseteq S_n$ is inverse-closed. By Lemma~\ref{tran}, we see that $X[k]$ is isomorphic to the graph whose vertex set $V(X[k])$ is the set of all $k$-tuples (with no repetition) from the set $[n]$, in which a $k$-tuple $(i_1,\dots,i_k)$ is adjacent to $(t(i_1),\dots,t(i_k))$, for each $t\in T$. (This was also observed by Bacher in \cite{Ba}.) Note that elements of $T$ may give rise to multiple edges and loops in $X[k]$.

Now suppose $T=\{(1~2),(1~3),\dots,(1~n)\}$. Let $V_1 \subseteq V(X[k])$ be the set of $k$-tuples containing the value $1$. Let $\overline{V_1}:=V(X[k]) \setminus V_1$. The following lemma is easily verified.

\begin{lemma}
\label{propofeqi}
Let $V_1,\overline{V_1}$ be the partition of $X[k]$ defined above. Then the following holds:
\begin{itemize}
	\item[\rm(i)] No two distinct vertices of $\overline{V_1}$ are adjacent in $X[k]$.
	\item[\rm(ii)] Every $v \in \overline{V_1}$ has $n-k-1$ self-loops and $k$ neighbors in $V_1$.
	\item[\rm(iii)] Let $u \in V_1$. Then $u$ is adjacent in $X[k]$ to exactly $n-k$ vertices in  $\overline{V_1}$, that are obtained from $u$ be replacing the entry 1 with a number different from the $k$ entries of $u$.
\end{itemize}
\end{lemma}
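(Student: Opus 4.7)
My plan is to exploit the explicit $k$-tuple description of $X[k]$ that was established just before the lemma statement. Recall that the vertices of $X[k]$ are the $k$-tuples $(i_1,\dots,i_k)$ with pairwise distinct entries from $[n]$, and for each transposition $t=(1~j)\in T$ the vertex $(i_1,\dots,i_k)$ sends an edge to $(t(i_1),\dots,t(i_k))$. Thus each edge (including loops/multi-edges) is obtained by coordinate-wise application of some $(1~j)$ with $2\le j\le n$, and the proof is just a case analysis on the relationship between the index $j$ of the transposition and the entries of the source tuple.

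For parts (i) and (ii), I would fix $v=(i_1,\dots,i_k)\in\overline{V_1}$, i.e.\ $1\notin\{i_1,\dots,i_k\}$, and examine what $(1~j)v$ looks like for each $j\in\{2,\dots,n\}$. If $j\notin\{i_1,\dots,i_k\}$ then $(1~j)$ fixes every entry of $v$, producing a self-loop; the number of such $j$ is $(n-1)-k=n-k-1$. If $j=i_\ell$ for some (unique) $\ell$, then the resulting tuple has $1$ in coordinate $\ell$ and equals $v$ elsewhere, and so lies in $V_1$; this accounts for $k$ edges to $V_1$ and, in particular, shows that every edge leaving $\overline{V_1}$ either loops at $v$ or enters $V_1$, establishing both (i) and (ii) at once.

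For (iii), I would fix $u=(i_1,\dots,i_k)\in V_1$ with, say, $i_\ell=1$, and again case-split on $j$. If $j\in\{i_1,\dots,i_k\}\setminus\{1\}$, then $(1~j)$ swaps the positions $\ell$ and the position containing $j$, leaving the resulting tuple still containing $1$, so the neighbor lies in $V_1$. If $j\notin\{i_1,\dots,i_k\}$, then $(1~j)$ replaces the entry $1$ at position $\ell$ by $j$ and fixes the remaining entries, so the neighbor lies in $\overline{V_1}$ and is exactly of the form described. Counting: the values $j\in\{2,\dots,n\}$ disjoint from $\{i_1,\dots,i_k\}$ number $(n-1)-(k-1)=n-k$, which gives the claimed $n-k$ neighbors in $\overline{V_1}$.

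There is essentially no obstacle here; the only thing to be careful about is bookkeeping the self-loops in (ii) (these come from $(1~j)$ with $j$ different from every $i_\ell$ and also different from $1$) and making sure the counts $n-k-1$ and $n-k$ come out right after excluding $1$ from the range of $j$. Since all three statements fall out of the same coordinate-wise substitution rule, the proof is genuinely a direct verification, which is why the authors label it ``easily verified.''
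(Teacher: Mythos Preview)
Your proposal is correct and is precisely the direct case analysis the paper has in mind; the authors give no proof at all (stating only that the lemma ``is easily verified''), and your coordinate-wise application of each $(1~j)$ to the source tuple, with the resulting counts $n-k-1$, $k$, and $n-k$, is exactly the intended verification.
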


\begin{lemma}
\label{gettingeigenvalues}
Let $1 \leq k \leq n-2$. Then $n-k-1$ is an eigenvalue of $X[k]$ with multiplicity at least ${n-2 \choose k}$.
\end{lemma}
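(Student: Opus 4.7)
The plan is to produce $\binom{n-2}{k}$ linearly independent eigenvectors with eigenvalue $n-k-1$, each supported on $\overline{V_1}$. For any function $f\colon V(X[k])\to\mathbb{R}$ that vanishes on $V_1$, I will verify the eigenvalue equation at vertices of $\overline{V_1}$ automatically, and then reduce the equation at vertices of $V_1$ to a clique-sum condition on the partial permutation graph $P(k,n-1)$.

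For $v\in\overline{V_1}$: by Lemma~\ref{propofeqi}(i), $v$ has no non-loop neighbors in $\overline{V_1}$, and by Lemma~\ref{propofeqi}(ii), $v$ carries exactly $n-k-1$ self-loops. Its remaining neighbors lie in $V_1$, where $f$ is zero. Hence $(Af)(v)=(n-k-1)f(v)$ holds with no further condition on $f$.

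For $v\in V_1$: there are no loops at $v$, and the $V_1$-neighbors of $v$ contribute nothing to $(Af)(v)$ since $f$ vanishes there, so the eigenvalue equation becomes $\sum_{u\in\overline{V_1},\,u\sim v}f(u)=0$. By Lemma~\ref{propofeqi}(iii), identifying $\overline{V_1}$ with $V(P(k,n-1))$ via the natural relabeling of $\{2,\dots,n\}$ as $\{1,\dots,n-1\}$, this set of neighbors consists of all $k$-tuples obtained from $v$ by replacing its entry $1$ with any value not already in $v$. These are precisely the $k$-tuples that agree with $v$ in $k-1$ coordinates and vary the remaining coordinate over all admissible values, i.e.\ a maximal clique of $P(k,n-1)$; and every maximal clique of $P(k,n-1)$ arises in this way.

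The problem therefore reduces to producing many linearly independent functions on $V(P(k,n-1))$ that sum to zero over every maximal clique. Since $1\le k\le n-2$, I can apply Lemma~\ref{function} to $P(k,n-1)$ and obtain such a linearly independent family of size $\binom{(n-1)-1}{k}=\binom{n-2}{k}$. Each extends by zero on $V_1$ to an eigenvector of $X[k]$ with eigenvalue $n-k-1$, giving the claimed multiplicity. The main content of the argument, and the only step that requires genuine care, is the identification of the $\overline{V_1}$-neighborhoods of $V_1$-vertices with maximal cliques of $P(k,n-1)$; everything else falls out of Lemmas~\ref{propofeqi} and~\ref{function}.
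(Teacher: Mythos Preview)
Your proof is correct and follows essentially the same approach as the paper's: support eigenvectors on $\overline{V_1}$, use the $n-k-1$ self-loops to verify the eigenvalue equation there, identify the $\overline{V_1}$-neighborhood of each $V_1$-vertex with a maximal clique of $P(k,n-1)$, and invoke Lemma~\ref{function} to supply $\binom{n-2}{k}$ linearly independent clique-sum-zero functions. The paper phrases the identification via an auxiliary graph $\mathcal{G}\cong P(k,n-1)$, but the content is identical.
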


\begin{proof}
Let $X[k]=(V[k],E[k])$ and let $V_1$ and $\overline{V_1}$ be as above. We construct the following auxiliary graph $\mathcal{G}$. The vertex set of $\mathcal{G}$ is $\overline{V_1}$, and two vertices $u,v \in \overline{V_1}$ are adjacent if and only if $v\neq u$ and there exists $w \in V_1$ such that $wu,wv \in E[k]$. Clearly, if $vu \in E(\mathcal{G})$ then $v$ and $u$ differ in exactly one coordinate. Hence, $\mathcal{G}$ is isomorphic to $P(k,n-1)$. Note that $n-1$ comes from the fact that the value $1$ does not appear in any of the vertices of $\overline{V_1}$. Observe that $k<n-1$, so $P(k,n-1)$ is defined.

Let $\mathcal{F}_{\mathcal{G}}$ be the set of functions $\phi: \overline{V_1} \to \{-1,0,1\}$ satisfying the properties stated in Lemma~\ref{function}, and let $\mathcal{F} \subseteq \mathcal{F}_{\mathcal{G}}$ be a linearly independent set in $\mathcal{F}_{\mathcal{G}}$ of cardinality $n-2 \choose k$, whose existence is given by Lemma~\ref{function}(iii).

For each $\phi \in \mathcal{F}$, let  $\phi' : V[k] \rightarrow \{0,1,-1\}$ be an extension of $\phi$ to $V[k]$ defined by:

$$
\phi'(v) = \left\{
              \begin{array}{rl}
              	\phi(v), & v \in \overline{V_1},\\[1mm]
                0, & v \in V_1.
              \end{array}
             \right.
$$
To conclude the proof, it suffices to show that for every $\phi \in \mathcal{F_G}$, $\phi'$ is an eigenvector of $X[k]$ with eigenvalue $n-k-1$, since $\mathcal{F_G}$ is an independent set.  This task is equivalent to verifying that, for every $v\in V[k]$, the following eigenvalue equation holds:
\begin{equation}
\label{eq:1}
\mbox{$(n-k-1)\phi'(v)=\displaystyle\sum_{vu\in E[k]} \phi'(u)$}
\end{equation}
where the sum is over all edges of $E[k]$ incident with $v$ including possible self-loops.
Recall that  $V[k]=V_1 \cup \overline{V_1}$.  If $v \in \overline{V_1}$, then by Lemma~\ref{propofeqi}~(i) and (ii), the only edges  contributing non-zero values to the right hand side of (\ref{eq:1}) are the $n-k-1$ self-loops at $v$. Hence (\ref{eq:1}) holds in this case.
If $v \in V_1$, then the left hand side of (\ref{eq:1}) is equal to 0. Now, by the definition of $\phi'$, the only edges contributing non-zero values to the right hand side of (\ref{eq:1}) are the edges $vu$, where $u \in \overline{V_1}$. By Lemma~\ref{propofeqi}~(iii), $v$ has precisely $n-k$ neighbors in $\overline{V_1}$, and they form a clique $K$ in $\mathcal{G}$ of cardinality $n-k$. By Lemma~\ref{basic-prop}, $K$ is a clique of maximum cardinality in $\mathcal{G}$ (recall that $\mathcal{G}$ is isomorphic to $P(k,n-1)$). By Lemma~\ref{function}~(ii),  the sum of the $\phi'$-values of the vertices of every maximum clique is zero and thus (\ref{eq:1}) holds in this case as well.
\end{proof}

The proof of the main result follows at once.

\begin{proof}[Proof of Theorem~\ref{main} (for non-zero eigenvalues).]
Since $G=X(S_n,T_n)$ is $(n-1)$-regular, $n-1$ is an eigenvalue of $G$ of multiplicity 1. Lemma~\ref{gettingeigenvalues} implies that for $1\leq k \leq n-2$, $n-k-1$ is an eigenvalue of $X[k]$ with multiplicity at least ${n-2 \choose k}$, and hence  by Lemma~\ref{eigenforeqi} also of $G$.  The same conclusion holds for the negative values since $G$ is bipartite, and hence the spectrum is symmetric with respect to 0. \end{proof}

\section{Eigenvalue zero}

In this section we prove that 0 is an eigenvalue of $X(S_n,T_n)$ of multiplicity at least $\binom{n-2}{2}$.

Let $K(2,n)$ be the graph on vertex set $S_{2,n}$ (all pairs of distinct elements from $[n]$). Two such pairs are adjacent if and only if either they have the same second coordinate, or they are transpose of each other (one is obtained from the other by interchanging the coordinates).

\begin{proposition}
\label{prop:cover}
The Cayley graph $X(S_n,T_n)$ is a cover over $K(2,n)$.
\end{proposition}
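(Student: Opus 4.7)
My plan is to exhibit an explicit map $p \colon V(X(S_n,T_n)) \to V(K(2,n))$ defined by $p(\pi) = (\pi^{-1}(1),\pi^{-1}(2))$, and then to check the three things required of a graph covering: surjectivity, the homomorphism property, and the local bijection property on neighborhoods. Surjectivity is immediate: the fibre over a pair $(a,b) \in S_{2,n}$ consists of all permutations with $\pi(a)=1$ and $\pi(b)=2$, so $|p^{-1}(a,b)| = (n-2)!$, which incidentally pins down the covering index.

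For the homomorphism property, I would fix an edge $\{\pi,(1\ i)\pi\}$ of $X(S_n,T_n)$, write $(a,b) = p(\pi)$, and observe that left-multiplication of $\pi$ by the transposition $(1\ i)$ interchanges $\pi^{-1}(1)$ with $\pi^{-1}(i)$ and leaves $\pi^{-1}(j)$ unchanged for all other $j$. So when $i = 2$ one gets $p((1\ 2)\pi) = (b,a)$, the transpose of $(a,b)$; and when $i \ge 3$ one gets $p((1\ i)\pi) = (\pi^{-1}(i),b)$, which shares its second coordinate with $p(\pi)$. In both cases the target is a neighbor of $(a,b)$ in $K(2,n)$ by the definition of that graph.

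Finally, the $n-1$ neighbors of $\pi$ in $X(S_n,T_n)$ are exactly the vertices $(1\ i)\pi$ for $2 \le i \le n$. By the case analysis above, their images under $p$ are $(b,a)$ together with the pairs $(\pi^{-1}(i),b)$ for $i = 3,\dots,n$. Since $\pi^{-1}$ restricts to a bijection $\{3,\dots,n\} \to [n]\setminus\{a,b\}$, these latter images are precisely the $n-2$ pairs $(c,b)$ with $c \in [n]\setminus\{a,b\}$; together with $(b,a)$ they form the full set of $n-1$ neighbors of $(a,b)$ in $K(2,n)$, each occurring exactly once. Hence $p$ restricts to a bijection from $N(\pi)$ onto $N(p(\pi))$, completing the verification. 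There is essentially no obstacle beyond choosing the right covering map and being careful about the left-multiplication convention (and hence about using $\pi^{-1}$ rather than $\pi$ in the definition of $p$); the real payoff comes later, when this cover is presumably used to lift a suitable eigenvector of $K(2,n)$ for the eigenvalue $0$ up to $X(S_n,T_n)$.
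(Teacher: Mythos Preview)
Your proof is correct and follows essentially the same approach as the paper: both define the covering map by $p(\pi)=(\pi^{-1}(1),\pi^{-1}(m))$ for a fixed $m$ (you take $m=2$, the paper takes $m=n$) and then verify that the $n-1$ generators $(1\ i)$ send $\pi$ to permutations whose images under $p$ are exactly the $n-1$ neighbors of $p(\pi)$ in $K(2,n)$. The paper's write-up is slightly terser because it invokes $(n-1)$-regularity of both graphs to reduce the local-bijection check to a surjection check, but the substance is identical.
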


\begin{proof}
For $(i,j)\in S_{2,n}$, let $U_{ij} = \{\pi\in S_n\mid \pi(i)=1 \textrm{ and } \pi(j)=n\}$.
Consider the mapping $p:S_n\to S_{2,n}$ defined as $p(\pi)=(i,j)$ if $\pi\in U_{ij}$.
We claim that $p$ is a covering projection $X=X(S_n,T_n)\to K(2,n)$. Since both graphs are regular of degree $n-1$, it suffices to see that every $\pi\in U_{ij}$ is adjacent in $X$ to a permutation in  $U_{lj}$ for each $l\in [n]\setminus\{i,j\}$ and is adjacent to a permutation in $U_{ji}$.
This is confirmed below. Clearly, if $t=\pi(l)$, where $l\ne i,j$, then $t\ne 1,n$ and
$(1\, t)\pi(l)=1$ and $(1\, t)\pi(j)=n$. Thus $\pi' = (1\, t)\pi \in U_{lj}$. Similarly,
$(1\, n)\pi(j)=1$ and $(1\, n)\pi(i)=n$. Thus $\pi' = (1\, n)\pi \in U_{ji}$.
This completes the proof since in each case, $\pi'$ is a neighbor of $\pi$ in $X$.
\end{proof}

The following corollary provides the missing evidence for the completion of the proof of Theorem \ref{main}.

\begin{proposition}
\label{prop:0 eigenvalue}
If\/ $n\ge 4$, then zero is an eigenvalue of $K(2,n)$ and hence also of $X(S_n,T_n)$ of multiplicity at least $\binom{n-1}{2}$.
\end{proposition}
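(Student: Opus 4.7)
My plan combines the covering projection $p\colon X(S_n,T_n)\to K(2,n)$ from Proposition~\ref{prop:cover} with the parity structure of $S_n$. Because each generator in $T_n$ is a transposition and therefore odd, every zero eigenvector $g$ of $K(2,n)$ will produce two linearly independent zero eigenvectors of $X=X(S_n,T_n)$, namely the pullback $g\circ p$ and the sign-twisted pullback $\pi\mapsto\mathrm{sgn}(\pi)\,g(p(\pi))$.

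I first identify the zero eigenspace of $K(2,n)$ directly. Parameterize $g\colon S_{2,n}\to\mathbb R$ by an $n\times n$ matrix $F$ with $F_{ii}=0$ and $F_{ij}=g(i,j)$. Since the neighbours of $(i,j)$ in $K(2,n)$ are $\{(k,j):k\neq i,j\}$ together with the transpose $(j,i)$, the eigenvalue equation reads $\sum_{k\neq i,j}F_{kj}+F_{ji}=0$. Writing $R_j=\sum_k F_{kj}$ turns this into $F_{ij}-F_{ji}=R_j$; reading the same identity at $(j,i)$ and adding gives $R_i+R_j=0$ for all $i\neq j$, which for $n\ge 3$ forces $R_j\equiv 0$ and $F$ symmetric. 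Conversely every symmetric $F$ with zero diagonal and zero row sums is a zero eigenvector, so this eigenspace has dimension $\binom{n}{2}-n=\tfrac{n(n-3)}{2}$.

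For the lift, Proposition~\ref{prop:cover} shows that as $t$ ranges over $T_n$ the image $p(t\pi)$ ranges bijectively over the $n-1$ neighbours of $p(\pi)$ in $K(2,n)$; hence $A_X(g\circ p)(\pi)=\sum_{t\in T_n}g(p(t\pi))=(A_{K(2,n)}g)(p(\pi))$. Because each $t\in T_n$ is odd, $\mathrm{sgn}(t\pi)=-\mathrm{sgn}(\pi)$, and the same bijection gives $A_X(\mathrm{sgn}\cdot(g\circ p))(\pi)=-\mathrm{sgn}(\pi)(A_{K(2,n)}g)(p(\pi))$; both lifts vanish whenever $A_{K(2,n)}g=0$. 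For independence, note that each fibre $p^{-1}(i,j)=U_{ij}$ has size $(n-2)!\ge 2$ and, for $n\ge 4$, contains permutations of both parities, so $g\circ p$ is constant on the fibre while $\mathrm{sgn}\cdot(g\circ p)$ alternates sign across the two parities. The two lifted spaces therefore meet only at $0$, producing $2\cdot\tfrac{n(n-3)}{2}=n(n-3)$ linearly independent zero eigenvectors of $X(S_n,T_n)$. The elementary inequality $n(n-3)\ge\binom{n-1}{2}$ for $n\ge 4$ (equivalent to $n^2-3n-2\ge 0$) then delivers the stated multiplicity. The main bookkeeping obstacle I anticipate is the careful verification of the bijection $t\mapsto p(t\pi)$ implicit in Proposition~\ref{prop:cover}, together with the check that both parities appear in every fibre once $n\ge 4$.
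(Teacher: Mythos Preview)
Your argument is correct and follows a genuinely different route from the paper's. The paper builds explicit zero eigenvectors of $K(2,n)$ of the form $x_{ij}=\alpha_i\beta_j+\alpha_j\beta_i$ (with $\alpha,\beta$ supported on disjoint sets, each summing to zero) and argues, by induction on $n$, that $\binom{n-1}{2}$ of them are linearly independent already in $K(2,n)$; the bound for $X(S_n,T_n)$ then comes from the single pullback $g\mapsto g\circ p$. You instead determine the whole zero eigenspace of $K(2,n)$ exactly---it is the space of symmetric zero--diagonal matrices with vanishing row sums, of dimension $\tfrac{n(n-3)}{2}$---and then use the sign-twisted lift $\pi\mapsto\mathrm{sgn}(\pi)\,g(p(\pi))$ alongside the ordinary pullback to double the count in $X(S_n,T_n)$ to $n(n-3)\ge\binom{n-1}{2}$. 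This comparison actually exposes a gap in the paper's proof: since $\tfrac{n(n-3)}{2}=\binom{n-1}{2}-1$, one can never find $\binom{n-1}{2}$ independent zero eigenvectors inside $K(2,n)$, and indeed for $n=4$ the three vectors the paper writes down are linearly dependent (with the natural choices $\alpha_a=1$, $\alpha_4=-1$, $\beta_b=1$, $\beta_c=-1$ one checks that the vector for $A=\{1,4\}$ plus the vector for $A=\{3,4\}$ equals the vector for $A=\{2,4\}$). Your sign-twist is precisely the extra ingredient that recovers the claimed multiplicity in $X(S_n,T_n)$; the necessary price is that the bound $\binom{n-1}{2}$ now holds only in $X(S_n,T_n)$ and not in $K(2,n)$, which by your computation is the most one can hope for.
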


\begin{proof}
It is well known that eigenvalues of the base graph are also eigenvalues of the cover. By Proposition \ref{prop:cover}, it suffices to show that 0 is an eigenvalue of $K(2,n)$ with multiplicity at least $\binom{n-1}{2}$.

Let $A,B \subset [n]$ be disjoint subsets of $[n]$, where $|A|\ge2$ and $|B|\ge 2$. For $i\in [n]$, let $\alpha_i$ and $\beta_i$ be real numbers so that $\alpha_i\ne0$ for $i\in A$, $\alpha_i=0$ for $i\notin A$, $\beta_i\ne0$ for $i\in B$, and $\beta_i=0$ for $i\notin B$, such that $\sum_{i=1}^n \alpha_i = 0$ and $\sum_{j=1}^n \beta_j = 0$. Finally, for each $(i,j)\in S_{2,n}$, define
\begin{equation}
    x_{ij} = \alpha_i \beta_j + \alpha_j \beta_i. \label{eq:xij}
\end{equation}
Observe that $x_{ij}=x_{ji}$ and $x_{ii}=0$ for any $i,j\in [n]$.

We claim that $x=(x_{ij})$ is an eigenvector for eigenvalue 0 in $K(2,n)$.
To see this, we have to show that the sum $s$ of the values on all neighbors of $(i,j)$ in $K(2,n)$ is zero. But this is easy to see:
$$
   s = x_{ji} + \sum_{l\ne i,j} x_{lj} = \sum_{l=1}^n x_{lj} = \sum_{l=1}^n (\alpha_l \beta_j + \alpha_j \beta_l) = \beta_j\,\sum_{i=1}^n \alpha_l + \alpha_j\,\sum_{i=1}^n \beta_l = 0.
$$

The eigenvectors $(x_{ij})$ for eigenvalue 0 as defined above span a subspace of dimension at least $\binom{n-1}{2}$. The proof is by induction on $n$. For $n=4$, consider partitions $A\cup B$ of $\{1,2,3,4\}$, where $A=\{1,4\}$, $A=\{2,4\}$, or $A=\{3,4\}$ (respectively), and $B=[4]\setminus A$. They give three linearly independent vectors. To see this, note that each of the corresponding eigenvectors defined by (\ref{eq:xij}) has a non-zero value where the other two have value zero. For $n\ge 5$, consider $\binom{n-2}{2}$ independent vectors obtained by taking subsets $A,B$ of $[n-1]$. They all have coordinate 0 for every $(i,n)$ ($1\le i<n$). Finally, we can add $n-2$ other eigenvectors that have precisely one non-zero coordinate $(k,n)$ for $k\in\{1,\dots,n-2\}$: for the $k$th one, take $A=\{1,n\}$ and $B=\{k,n-1\}$, except for $k=1$, when $A=\{n-1,n\}$ and $B=\{1,n-1\}$). All together we have $\binom{n-2}{2} + n-2 = \binom{n-1}{2}$ independent eigenvectors.
\end{proof}





\bibliographystyle{abbrv}
\bibliography{refs}
\end{document}